\newcommand{\excise}[1]{}
\newtheorem{thm}{Theorem}[section]
\newtheorem{lemma}[thm]{Lemma}
\newtheorem{cor}[thm]{Corollary}
\newtheorem{prop}[thm]{Proposition}
\theoremstyle{definition}
\newtheorem{example}[thm]{Example}
\newtheorem{remark}[thm]{Remark}
\newtheorem{defn}[thm]{Definition}
\numberwithin{equation}{section}
\newcommand{\ring}[1]{\ensuremath{\mathbb{#1}}}
\newcommand\0{\mathbf{0}}
\newcommand\CC{\ring{C}}
\newcommand\NN{\ring{N}}
\newcommand\QQ{\ring{Q}}
\newcommand\RR{\ring{R}}
\newcommand\ZZ{\ring{Z}}
\newcommand\kk{\Bbbk}
\newcommand\pp{{\mathfrak p}}
\newcommand\uu{{\mathbf u}}
\newcommand\xx{{\mathbf x}}
\newcommand\cP{{\mathcal P}}
\newcommand\cW{{\mathcal W}}
\newcommand\oA{\hspace{.35ex}\ol{\hspace{-.35ex}A\hspace{-.2ex}}\hspace{.2ex}}
\newcommand\oQ{\hspace{.15ex}\ol{\hspace{-.15ex}Q\hspace{-.25ex}}\hspace{.25ex}}
\renewcommand\aa{{\mathbf a}}
\renewcommand\th{\mathrm{th}}
\newcommand\til{\mathord\sim}
\newcommand\onto{\twoheadrightarrow}
\newcommand\minus{\smallsetminus}
\renewcommand\iff{\Leftrightarrow}
\renewcommand\implies{\Rightarrow}
\def\ol#1{{\overline {#1}}}
\begin{document}

\mbox{}
\vspace{-7ex}
\title{Affine stratifications from finite mis\`ere quotients}
\author{Ezra Miller}
\address{Mathematics Department\\Duke University\\Durham, NC 27708}
\email{http://math.duke.edu/\~{\hspace{-.2ex}}ezra}
\thanks{The author had support from NSF grants DMS-0449102 = DMS-1014112 and DMS-1001437}

\makeatletter\@namedef{subjclassname@2010}{\textup{2010} Mathematics
	Subject Classification}\makeatother
\subjclass[2010]{Primary: 20M14, 20M15, 20M30, 91A46, 91A05, 52B20;
	Secondary: 05E40, 20M25, 13F99}
%
\renewcommand{\keywordsname}{Key words}
\keywords{affine semigroup, lattice game, mesoprimary decomposition,
mis\`ere quotient, monoid}

\date{23 July 2011}

\begin{abstract}
Given a morphism from an affine semigroup $Q$ to an arbitrary
commutative monoid, it is shown that every fiber possesses an affine
stratification: a partition into a finite disjoint union of translates
of normal affine semigroups.  The proof rests on mesoprimary
decomposition of monoid congruences
and a novel list of equivalent conditions characterizing the existence
of an affine stratification.  The motivating consequence of the main
result is a special case of a conjecture due to Guo and the author
on the existence of affine stratifications for (the set of winning
positions~of) any lattice game.  The special case proved here assumes
that the lattice game has finite mis\`ere quotient, in the sense of
Plambeck and Siegel.\vspace{-3.2ex}
\end{abstract}
\maketitle

\section{Introduction}

Lattice games encode finite impartial combinatorial games---and
winning strategies for them---in terms of lattice points in rational
convex polyhedra \cite{latticeGames}; see Section~\ref{s:misere}.  The
concept grew out of Plambeck's theory of mis\`ere quotients
\cite{Pla05}, as developed by Plambeck and Siegel \cite{misereQuots}
(see also Siegel's lecture notes \cite{siegelLectures}, particularly
Figure~7 in \mbox{Lecture}~5 there).  Their purpose was to provide
data structures for recording and computing winning strategies of
combinatorial games, such as octal games, under the mis\`ere play
condition, where the last player to move loses.  In that spirit, Guo
and the author conjectured that the lattice points encoding the
strategy of any lattice game have a particularly well-behaved
presentation, called an affine stratification: a partition into a
finite disjoint union of translates of affine semigroups
\cite[Conjecture~8.9]{latticeGames}.  The conjecture is as far from
true as possible, in general, because lattice games support universal
computation, as shown by Fink \cite{fink11}.
However, the most successful applications of mis\`ere quotients thus
far have occurred when the quotient is finite, because of amenability
to algorithmic computation.  Bridging mis\`ere quotient theory and
lattice games in the case of finite quotients is therefore one of the
primary intents of this note, whose motivating result is the existence
of affine stratifications for lattice games with finite mis\`ere
quotients~(Theorem~\ref{t:affineStrat}).

The proof comes via a more general main result, of independent
interest, on morphisms from affine semigroups to arbitrary commutative
monoids: the fibers of such morphisms possess affine stratifications
(Theorem~\ref{t:strat}).  That result, in turn, follows from two
additional results of independent interest.  The first is a host of
equivalent conditions characterizing the existence of an affine
stratification (Theorem~\ref{t:equiv}).  The second is existence of
mesoprimary decomposition for congruences on finitely generated
\mbox{commutative} monoids, a theory whose development occupies half
of a full-length paper of its own \cite{mesoprimary}.  One of the
motivations for defining mesoprimary decomposition in the first place
was its anticipated relevance to combinatorial game theory, via the
results presented here and via potential further applications toward
the existence of affine stratifications for the class of
\emph{squarefree} lattice games \cite[Corrigendum]{latticeGames}
(which includes all octal games), where the affine stratification
conjecture~remains~open.

Beyond mesoprimary decomposition and other more elementary theory of
commutative monoids, the reasoning in this note involves elementary
polyhedral geometry, including subdivisions and Minkowski sums, as
well as combinatorial commutative algebra of finely graded modules
over affine semigroup rings, particularly filtrations~thereof.

\section{Affine stratifications}\label{s:affine}

The goal is to decompose certain sets of lattice points in polyhedra
in particularly nice ways, following \cite[\S8]{latticeGames}, where
this definition originates.

\begin{defn}\label{d:strat}
An \emph{affine stratification} of a subset $\cW \subseteq \ZZ^d$ is a
finite partition
$$%
  \cW = \biguplus_{i=1}^r W_i
$$
of~$\cW$ into a finite disjoint union of sets~$W_i$, each of which is
a \emph{finitely generated module} for an affine semigroup $A_i
\subseteq \ZZ^d$; that is, $W_i = F_i + A_i$, where $F_i \subseteq
\ZZ^d$ is a finite set and $A_i$ is a finitely generated submonoid of
the free abelian group~$\ZZ^d$ of rank~$d$.
\end{defn}

For the coming sections, it will be helpful to specify, in
Theorem~\ref{t:equiv}, some alternative decompositions equivalent to
affine stratifications.  For that, we need four lemmas as stepping
stones.  In the first, a \emph{normal} affine semigroup is the
intersection of a rational polyhedral cone in~$\RR^d$ with a
sublattice of~$\ZZ^d$.

\begin{lemma}\label{l:sat}
Every affine semigroup $A \subseteq \ZZ^d$ possesses an affine
stratification in which each stratum is a translate $f_i + A_i$ of a
normal affine semigroup $A_i \subseteq \ZZ^d$.
\end{lemma}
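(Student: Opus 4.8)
The plan is to realize $A$ as a finitely generated module over a \emph{normal} affine subsemigroup $A'\subseteq A$ having the same cone, and then to read off the desired partition from a prime filtration of the finely graded ring $\kk[A]$ over $\kk[A']$.

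First I would form the saturation $\oA = C \cap G$, where $C = \RR_{\ge 0}A$ is the rational cone generated by $A$ and $G = \ZZ A$ is the group it generates; this $\oA$ is normal by definition and is module-finite over $A$, say $\oA = \bigcup_{j}(h_j + A)$ for finitely many $h_j\in\oA$. Since each $h_j$ lies in the rational cone $C$, clearing denominators in a nonnegative rational expression of $h_j$ in the generators of $A$ shows that a positive integer multiple $e_j h_j$ lies in $A$; taking $e\ge 1$ to be a common multiple of the $e_j$, the identity $e(h_j+a) = eh_j + ea$ gives $e\oA\subseteq A$. Now set $A' := e\oA = C\cap eG$. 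This is the intersection of the cone $C$ with the sublattice $eG\subseteq G$, hence a normal affine semigroup with the same cone as $A$, and $A'\subseteq A$. Because $eG$ has finite index in $G$, the saturation $\oA$---and therefore its submonoid $A$---is a finitely generated module over $A'$; equivalently $\kk[A]$ is a finitely generated $\ZZ^d$-graded module over the Noetherian ring $\kk[A']$.

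The key step is then a prime filtration. The finely graded (that is, $\ZZ^d$-homogeneous) primes of the normal affine semigroup ring $\kk[A']$ are exactly the face primes $\pp_F$ spanned by the monomials off a face $F\preceq C$, with quotient $\kk[A']/\pp_F \cong \kk[A'\cap F]$; and $A'\cap F = F\cap eG$ is again a normal affine semigroup. I would choose a finite $\ZZ^d$-graded filtration of $\kk[A]$ whose successive quotients are graded shifts $\kk[A'\cap F_i](-\aa_i)$. Exactness in each degree gives, for every $\aa\in\ZZ^d$, the identity $\dim_\kk\kk[A]_{\aa} = \sum_i \dim_\kk \kk[A'\cap F_i]_{\aa-\aa_i}$, in which the left side equals $1$ if $\aa\in A$ and $0$ otherwise, while each summand on the right equals $1$ if $\aa\in \aa_i + (A'\cap F_i)$ and $0$ otherwise. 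Hence the translated normal semigroups $\aa_i + (A'\cap F_i)$ are pairwise disjoint and cover exactly $A$, which is the required affine stratification.

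The main obstacle is locating the normal subsemigroup $A'$ with the two properties it must satisfy simultaneously: that it be contained in $A$ (so that $A$ is an honest $A'$-module) yet share the full cone $C$ (so that the face quotients above account for all of $A$). The multiple-clearing argument $e\oA\subseteq A$ is what makes both demands compatible, and the squarefree nature of $\kk[A]$---each graded piece having dimension at most one---is what upgrades the filtration from a mere covering into a disjoint partition.
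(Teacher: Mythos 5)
Your proof is correct, and it takes a genuinely different route from the paper's. Both arguments start from the saturation $\oA = \RR_+A \cap \ZZ A$ and both rely on a graded prime (toric) filtration, but they extract normality differently. The paper uses the fact that $A$ contains a \emph{translate} $\aa + \oA$ of its saturation \cite[Exercise~7.15]{cca}, filters the quotient $\CC[A]/\xx^\aa\CC[\oA]$ over $\CC[A]$ itself, and so obtains filtration quotients that are translates of faces of~$A$---which need not be normal; it is then forced to induct on $\dim\CC[A]$, using that only proper faces occur in the filtration. You instead show that a \emph{multiple} $e\oA$ of the saturation sits inside~$A$ (shrinking the lattice rather than translating), so that the base $A' = e\oA = \RR_+A \cap e\,\ZZ A$ is a normal affine semigroup with the same cone as~$A$ over which $\kk[A]$ is module-finite; since every face of a normal affine semigroup is again normal, a single prime filtration over $\kk[A']$ already has normal quotients, and your Hilbert-function count (each fine graded piece of $\kk[A]$ has dimension at most one) converts the filtration into a disjoint partition in one stroke, with no induction. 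Your key containment $e\oA \subseteq A$ is correctly justified---clear denominators for the finitely many module generators $h_j$ of $\oA$ over $A$, then use $e(h_j+a)=eh_j+ea$---and the identification $e\oA = \RR_+A\cap e\,\ZZ A$ holds because $\RR_+A$ is a cone. As for what each approach buys: the paper's strata are intrinsic to $A$ (a translate of $\oA$ together with translates of stratified proper faces of~$A$), and it leans only on the cited textbook exercise; yours trades that intrinsic description for a uniform, induction-free argument whose strata are faces of the cone intersected with the finer lattice $e\,\ZZ A$. In fact your argument is essentially the ``simpler version'' that the paper itself deploys for the implication 4 $\Rightarrow$ 5 of Theorem~\ref{t:equiv}, where the base semigroup is already normal and, as the paper puts it, dimension is not an issue; your observation that one can always reduce to such a normal base by passing to $e\oA$ is what lets you bypass the induction entirely.
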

\begin{proof}
Let $\oA = \RR_+ A \cap \ZZ A$ denote the \emph{saturation} of~$A$:
the set of lattice points lying in the intersection of the real cone
generated by~$A$ with the group generated by~$A$.  Then $A$ contains a
translate $\aa + \oA$ of its saturation by \cite[Exercise~7.15]{cca}.
Transferring this statement to the language of monoid algebras, the
affine semigroup ring $\CC[A]$ has a $\CC[A]$-submodule
$\xx^\aa\CC[\oA] \subseteq \CC[A]$.  The quotient $M =
\CC[A]/\xx^\aa\CC[\oA]$ is a finitely generated $\ZZ^d$-graded
$\CC[A]$-module.  The module $M$ is therefore \emph{toric}, in the
sense of \cite[Definition~4.5]{rankJumps}, by
\cite[Example~4.7]{rankJumps}.  This means that $M$ has a \emph{toric
filtration} $0 = M_0 \subset M_1 \subset \cdots \subset M_{\ell-1}
\subset M_\ell = M$, in which $M_j/M_{j-1}$ is, for each~$j$, a
$\ZZ^d$-graded translate $\xx^{\aa_j}\CC[F_j]$ of the affine semigroup
ring~$\CC[F_j]$ for some face~$F_j$ of~$A$ and some $\aa_j \in \ZZ^d$.
Transferring this statement back into the language of lattice points,
$$%
  A = (\aa + \oA) \uplus \biguplus_j (\aa_j + F_j)
$$
is a disjoint union of a translated normal affine semigroup $\aa +
\oA$ and a disjoint union of translates $\aa_j + F_j$ of faces of~$A$.
But $\dim M < \dim \CC[A]$, since $\dim \xx^\aa\CC[\oA] = \dim
\CC[A]$, so each face $F_j$ that appears is a proper face of~$A$.
Therefore the proof is done by induction on $\dim \CC[A]$, the case of
dimension~$0$ being trivial, since then $A = \{0\}$.
\end{proof}

In the next lemma, keep in mind that the polyhedra need not be
bounded.

\begin{lemma}\label{l:polyhedra}
Any finite union of (rational) convex polyhedra in~$\RR^d$ can be
expressed as a disjoint union of finitely many sets, each of which is
the relative interior of a (rational) convex polyhedron.
\end{lemma}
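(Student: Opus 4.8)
The plan is to reduce everything to the common refinement furnished by the hyperplane arrangement spanned by all the inequalities defining the given polyhedra. Write the finite union as $\bigcup_{i=1}^n P_i$, and express each $P_i$ as an intersection of finitely many closed half-spaces $\{x : \ell_k(x) \le b_k\}$, representing any equalities by pairs of opposite inequalities. Let $H_1,\dots,H_m$ be the resulting list of affine hyperplanes $\{\ell_k = b_k\}$, collected over all $i$ and all of their defining inequalities; in the rational case the forms $\ell_k$ and the numbers $b_k$ may be taken rational. For a sign vector $\sigma \in \{-,0,+\}^m$, let $C_\sigma$ be the set of points $x$ for which $\ell_k(x)-b_k$ has sign $\sigma_k$ for every~$k$. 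The finitely many nonempty cells $C_\sigma$ partition~$\RR^d$.

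The key observation is that each linear form $\ell_k-b_k$ has constant sign on $C_\sigma$, so membership in any half-space $\{\ell_k \le b_k\}$ is constant across $C_\sigma$; consequently each $P_i$, being an intersection of such half-spaces, either contains $C_\sigma$ entirely or is disjoint from it. Hence $\bigcup_{i=1}^n P_i = \biguplus\{C_\sigma : C_\sigma \subseteq P_i \text{ for some } i\}$ is a disjoint union of finitely many cells. It then remains only to verify that each nonempty $C_\sigma$ is the relative interior of a (rational) convex polyhedron: its closure $\ol{C_\sigma}$ is the polyhedron obtained by relaxing the strict inequalities of $C_\sigma$ to non-strict ones, and one checks that $C_\sigma = \mathrm{relint}(\ol{C_\sigma})$.

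I expect the only step requiring care to be this last verification. The subtlety is to rule out the possibility that one of the strict inequalities cutting out $C_\sigma$ is in fact an \emph{implicit equality} of $\ol{C_\sigma}$, i.e.\ holds with equality throughout $\ol{C_\sigma}$; for then the affine hull of $\ol{C_\sigma}$ would be smaller than expected and $\mathrm{relint}(\ol{C_\sigma})$ could fail to recover the strict locus. This is exactly what nonemptiness of $C_\sigma$ prevents: a witness point $x_0 \in C_\sigma$ satisfies every such inequality strictly, so none of them is an implicit equality, the affine hull of $\ol{C_\sigma}$ is cut out precisely by the equalities indexed by $\{k : \sigma_k = 0\}$, and the relative interior therefore consists exactly of the points satisfying the remaining inequalities strictly, namely $C_\sigma$ itself. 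Rationality is preserved throughout since all the data are rational, which completes the argument.
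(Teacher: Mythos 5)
Your proof is correct and takes essentially the same approach as the paper: both refine the union by the arrangement of all hyperplanes appearing in the half-space descriptions of the $P_i$, so that each relatively open cell is either contained in or disjoint from every $P_i$; your sign-vector cells $C_\sigma$ are exactly the relative interiors of the faces of the polyhedral complex that the paper invokes by citing Ziegler. The only difference is that you verify by hand, via the implicit-equality argument, the standard facts about the arrangement's cells that the paper absorbs into that citation.
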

\begin{proof}
The polyhedra in the given union~$U$ are defined as intersecions of
finitely many halfspaces.  The totality of all hyperplanes involved
subdivide the ambient space into finitely many closed---but perhaps
unbounded---polyhedral regions.  This union of regions is a
\emph{polyhedral complex} (see \cite[Section~5.1]{ziegler} for the
definition) with finitely many faces, some of which may be unbounded.
By construction, the relative interior of each face is either
contained in~$U$ or disjoint from~$U$, proving the lemma.
\end{proof}

The following will be used in the proofs of both Lemma~\ref{l:union}
and Theorem~\ref{t:equiv}.

\begin{lemma}\label{l:module}
If\/ $\Pi = P + C$ is a rational convex polyhedron in~$\RR^d$,
expressed as the Minkowski sum of a polytope~$P$ and a cone~$C$
\cite[Theorem~1.2]{ziegler}, and\/ $\Pi^\circ$ is its relative
interior, then $\Pi \cap \ZZ^d$ and\/ $\Pi^\circ \cap \ZZ^d$ are
finitely generated modules for $A = C \cap \ZZ^d$.
\end{lemma}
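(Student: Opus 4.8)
The plan is to verify first that both $\Pi \cap \ZZ^d$ and $\Pi^\circ \cap \ZZ^d$ are modules for $A = C \cap \ZZ^d$, and then to establish finite generation separately: the set $\Pi \cap \ZZ^d$ by an elementary homogenization argument, and its relative-interior counterpart by exhibiting it as a submodule of the former.

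First I would check the module structures. Since $C$ is a cone, $C + C = C$, so $\Pi + A = P + C + (C \cap \ZZ^d) \subseteq P + C = \Pi$; as $A \subseteq \ZZ^d$, adding any $a \in A$ to a lattice point of~$\Pi$ yields a lattice point of~$\Pi$, whence $\Pi \cap \ZZ^d$ is an $A$-module. For the relative interior I would invoke the identity $\mathrm{relint}(P + C) = \mathrm{relint}(P) + \mathrm{relint}(C)$ together with the cone absorption property $\mathrm{relint}(C) + C = \mathrm{relint}(C)$: writing a point of $\Pi^\circ$ as $u = p + c$ with $p \in \mathrm{relint}(P)$ and $c \in \mathrm{relint}(C)$, for $a \in A \subseteq C$ one gets $u + a = p + (c + a) \in \mathrm{relint}(P) + \mathrm{relint}(C) = \Pi^\circ$, so $\Pi^\circ \cap \ZZ^d$ is an $A$-submodule of $\Pi \cap \ZZ^d$.

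Next I would prove finite generation of $\Pi \cap \ZZ^d$ by homogenizing. Let $\widehat P = \RR_+(P \times \{1\}) \subseteq \RR^{d+1}$ be the cone over $P$, and set $\widehat\Pi = \widehat P + (C \times \{0\})$. As $P$ is a polytope and $C$ is a rational cone, $\widehat\Pi$ is a rational polyhedral cone, so $S = \widehat\Pi \cap \ZZ^{d+1}$ is a finitely generated monoid by Gordan's lemma \cite{cca}. Because $P$ is bounded, the height-$0$ face of $\widehat\Pi$ (last coordinate~$0$) is exactly $C \times \{0\}$, so its lattice points $A \times \{0\}$ form the face submonoid generated by those monoid generators of~$S$ having height~$0$. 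Every $(x,1) \in S$ is a nonnegative integer combination of generators whose heights sum to~$1$; since each generator has nonnegative integer height, exactly one generator of height~$1$ appears, with coefficient~$1$, the others having height~$0$. Thus $(x,1) = (f,1) + (a,0)$ with $a \in A$, giving $\Pi \cap \ZZ^d = F + A$, where $F$ is the finite set of first coordinates of the height-$1$ generators of~$S$.

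Finally, for $\Pi^\circ \cap \ZZ^d$ I would pass to monoid algebras as in Lemma~\ref{l:sat}. The ring $\CC[A]$ is a finitely generated $\CC$-algebra, hence Noetherian, and $\CC[\Pi \cap \ZZ^d] = \bigoplus_{u \in \Pi \cap \ZZ^d} \CC\,\xx^u$ is a finitely generated $\ZZ^d$-graded $\CC[A]$-module by the previous step. Since $\Pi^\circ \cap \ZZ^d$ is an $A$-submodule, $\CC[\Pi^\circ \cap \ZZ^d]$ is a $\ZZ^d$-graded $\CC[A]$-submodule, hence finitely generated; its homogeneous generators are scalar multiples of monomials $\xx^f$ for a finite set $F' \subseteq \Pi^\circ \cap \ZZ^d$, and comparing degrees yields $\Pi^\circ \cap \ZZ^d = F' + A$. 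The elementary finiteness arguments here are routine; the step demanding the most care is the relative-interior bookkeeping in the module-structure check, namely the compatibility of relative interiors with Minkowski sums and the cone absorption property, as these are precisely what guarantee that $\Pi^\circ \cap \ZZ^d$ is a genuine $A$-submodule rather than merely a subset. Once that is secured, its finite generation follows formally from Noetherianity of $\CC[A]$.
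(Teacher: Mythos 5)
Your proof is correct, and it takes a genuinely different route from the paper's on the harder half of the statement. For the closed polyhedron both arguments homogenize: your cone $\widehat\Pi = \RR_+(P \times \{1\}) + (C \times \{0\})$ is exactly the homogenization $\ol\Pi$ used in the paper (which, note, is rational because $\Pi$ is, regardless of whether the chosen polytope $P$ is itself rational), but where you run an elementary Gordan's-lemma-plus-height-bookkeeping argument to exhibit $\Pi \cap \ZZ^d = F + A$, the paper instead cites \cite[Eq.~(1) and Lemma~2.2]{irredres} or \cite[Theorem~11.13]{cca} to identify the height-one slice as a finitely generated module over the face $A = \ol\Pi \cap (\ZZ^d \times \{0\})$; your version is self-contained, the paper's is shorter. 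The real divergence is at $\Pi^\circ$. The paper reduces the open case to the closed case arithmetically: it moves each bounding hyperplane $\phi_j(\xx) \geq c_j$ inward to $\phi_j(\xx) \geq c_j + \varepsilon$ with $\varepsilon$ rational and smaller than any positive value of $\phi_j$ on $\ZZ^d$ (discreteness coming from rationality), so that $\Pi^\circ \cap \ZZ^d = \Pi_\varepsilon \cap \ZZ^d$ for a closed rational polyhedron with the same recession cone, and then applies the closed case to $\Pi_\varepsilon$. You instead show that $\Pi^\circ \cap \ZZ^d$ is an $A$-submodule of $\Pi \cap \ZZ^d$, using $\mathrm{relint}(P+C) = \mathrm{relint}(P) + \mathrm{relint}(C)$ and $\mathrm{relint}(C) + C = \mathrm{relint}(C)$ (standard convexity facts, e.g.\ from Rockafellar's \emph{Convex Analysis}, which deserve a citation since you do not prove them), and then finish by Noetherianity of $\CC[A]$: a $\ZZ^d$-graded submodule of the finitely generated module $\CC\{\Pi \cap \ZZ^d\}$ is finitely generated, necessarily by monomials. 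What each buys: the paper's $\varepsilon$-trick keeps the entire argument polyhedral and needs only one finite-generation statement; your Noetherian argument is softer, avoids the $\varepsilon$ bookkeeping entirely, and proves something strictly stronger --- any subset of $\Pi \cap \ZZ^d$ closed under addition of $A$ is a finitely generated $A$-module, whether or not it arises as a relative interior.
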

\begin{proof}
Suppose that $\Pi = \bigcap_j \{\xx \in \RR^d \mid \phi_j(\xx) \geq
c_j\}$ for rational linear functions $\phi_j$ and rational
constants~$c_j$.  The case of $\Pi^\circ$ follows from that of~$\Pi$
itself: the lattice points in $\Pi^\circ$ are the same as those in the
closed polyhedron $\Pi_\varepsilon = \bigcap_j \{\xx \in \RR^d \mid
\phi_j(\xx) \geq c_j + \varepsilon\}$ obtained from~$\Pi$ by moving
each of its bounding hyperplanes inward by a small rational distance,
where $\varepsilon$ is less than any nonzero positive value of
$\phi_j$ on~$\ZZ^d$ for all~$j$.  (The rationality of~$\phi_j$
guarantees that $\phi_j(\ZZ^d)$ is a discrete subset of the rational
numbers~$\QQ$.)

Given the closed polyhedron~$\Pi$, consider its \emph{homogenization}
\cite[Section~1.5]{ziegler}: the closure $\ol\Pi$ of the cone over a
copy of $\Pi = \Pi \times \{1\}$ placed at height~$1$ in~$\RR^{d+1} =
\RR^d \times \RR$.  The intersection of~$\ol\Pi$ with the first factor
$\RR^d = \RR^d \times \{0\}$ is the cone~$C$, and $\ol\Pi \cap \ZZ^d =
A$ is a face of the affine semigroup $\oA = \ol\Pi \cap \ZZ^{d+1}$.
The intersection $\Pi \cap \ZZ^d$ is isomorphic, as a module over~$A$,
to the intersection $M = \ol\Pi \cap (\ZZ^d \times \{1\}) = \oA \cap
(\ZZ^d \times \{1\})$ with the copy of~$\ZZ^d$ at height~$1$.  The
result now follows from \cite[Eq.~(1) and Lemma~2.2]{irredres} or
\cite[Theorem~11.13]{cca}, where $M$ is identified as the set of
(exponent vectors of) monomials annihilated by the prime ideal
$\pp_{\oA} \subseteq \CC[\ol\Pi]$ modulo an irreducible monomial ideal
of~$\kk[\ol\Pi]$.  (The prime ideal $\pp_{\oA}$ is the kernel of the
surjection $\CC[\ol\Pi] \onto \CC[\oA]$; this argument is taken from
the proof of \cite[Proposition~2.13]{primDecomp}.)
\end{proof}

\begin{lemma}\label{l:union}
If\/ $\cW \subseteq \ZZ^d$ is a finite union of sets~$W_i$, each a
translate of a normal affine semigroup $A_i = \RR_+ A_i \cap L_i$ for
some sublattice $L_i \subseteq \ZZ^d$, then $\cW$ can be expressed as
such a union in which $L_i = L$ for all~$i$ is a fixed sublattice of
finite index in~$\ZZ^d$.
\end{lemma}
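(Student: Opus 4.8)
The plan is to take the common lattice to be $L = m\ZZ^d$ for a suitable positive integer~$m$, and then to re-expand each given stratum over this finer lattice. First I would reduce to the situation where each $L_i$ lies inside the real span $V_i = \RR A_i$ of~$A_i$: since $\RR_+ A_i \subseteq V_i$, intersecting the cone with $L_i$ agrees with intersecting it with $L_i \cap V_i$, so replacing each $L_i$ by $L_i \cap V_i$ leaves $A_i$ unchanged while making $L_i$ a full-rank lattice in~$V_i$. In particular $L_i$ is then a finite-index sublattice of the saturated lattice $\ZZ^d \cap V_i$, and I set $e_i = [\ZZ^d \cap V_i : L_i]$.

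The central point---and the step I expect to be the only real obstacle---is that one cannot hope to find a single finite-index lattice contained in every~$L_i$, because the $L_i$ may have small rank and hence infinite index in~$\ZZ^d$. The remedy is to demand containment only after intersecting with the relevant subspace. Choosing $m$ to be a common multiple of $e_1, \dots, e_r$ and setting $L = m\ZZ^d$, one has $L \cap V_i = m(\ZZ^d \cap V_i) \subseteq e_i(\ZZ^d \cap V_i) \subseteq L_i$ for every~$i$, while $L$ retains finite index $m^d$ in~$\ZZ^d$. Thus $L \cap V_i$ is a finite-index sublattice of~$L_i$, and $\RR_+ A_i \cap L = \RR_+ A_i \cap (L \cap V_i)$ is a normal affine semigroup for this one common lattice~$L$.

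It then remains to write each $A_i = \RR_+ A_i \cap L_i$ as a finite union of translates of $\RR_+ A_i \cap L$. Here I would partition $L_i$ into the finitely many cosets $L_i = \biguplus_t \bigl(r_{i,t} + (L \cap V_i)\bigr)$ of the finite-index subgroup $L \cap V_i$, which splits $A_i$ as the disjoint union over~$t$ of the sets $\RR_+ A_i \cap \bigl(r_{i,t} + (L \cap V_i)\bigr) = r_{i,t} + \bigl((\RR_+ A_i - r_{i,t}) \cap (L \cap V_i)\bigr)$. The translated cone $\RR_+ A_i - r_{i,t}$ is a rational polyhedron whose recession cone is $\RR_+ A_i$, so Lemma~\ref{l:module}---applied inside $V_i$ after identifying $L \cap V_i$ with a standard lattice, the statement being manifestly invariant under such an identification---shows that $(\RR_+ A_i - r_{i,t}) \cap (L \cap V_i)$ is a finitely generated module over $\RR_+ A_i \cap (L \cap V_i) = \RR_+ A_i \cap L$. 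Each such module is a finite union of translates of $\RR_+ A_i \cap L$, so assembling over~$t$ and~$i$ and translating by the~$f_i$ exhibits $\cW$ as a finite union of translates of normal affine semigroups all taken with respect to the single finite-index lattice~$L$, as desired.
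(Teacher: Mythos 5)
Your proof is correct and takes essentially the same route as the paper's: both construct a common finite-index lattice $L$, decompose each $A_i$ along cosets of (the relevant restriction of) $L$, and invoke Lemma~\ref{l:module} to recognize each coset intersection as a finitely generated module over $\RR_+ A_i \cap L$. The only difference is cosmetic --- the paper enlarges each $L_i$ by a complementary sublattice and sets $L = \bigcap_i L_i$, whereas you shrink each $L_i$ to $L_i \cap V_i$ and take $L = m\ZZ^d$ for a common multiple $m$ of the indices.
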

\begin{proof}
Suppose that $A_i = \RR_+ A_i \cap L_i$ is given for all~$i$.  Taking
a direct sum with a complementary sublattice, we may assume that $L_i$
has finite index in~$\ZZ^d$ for all~$i$.  Now set $L = \bigcap_i L_i$.
Then $A_i = \bigcup_{\lambda \in L_i/L} A_i \cap (\lambda + L)$ is a
finite union of sets obtained by intersecting a coset of~$L$
with~$A_i$.  Each such set $A_i \cap (\lambda + L)$ is a finitely
generated module over $A_i \cap L$ by Lemma~\ref{l:module}.  The
desired union is therefore achievable using translates of the normal
affine semigroups $A_i \cap L$.
\end{proof}

\begin{thm}\label{t:equiv}
The following are equivalent for a set $\cW \subseteq \ZZ^d$ of
lattice points.
\begin{enumerate}
\item%
$\cW$ possesses an affine stratification.
\item%
$\cW$ is a finite (not necessarily disjoint) union of sets~$W_i$, each
of which is a finitely generated module for an affine semigroup $A_i
\subseteq \ZZ^d$.
\item%
$\cW$ is a finite (not necessarily disjoint) union of sets~$W_i$, each
of which is a translate $f_i + A_i$ of an affine semigroup $A_i
\subseteq \ZZ^d$.
\item
$\cW$ is a finite (not necessarily disjoint) union of sets~$W_i$, each
of which is a translate $f_i + A_i$ of a normal affine semigroup
$A_i \subseteq\ZZ^d$.
\item%
$\cW$ is a finite disjoint union of sets~$W_i$, each of which is a
translate $f_i + A_i$ of a normal affine semigroup $A_i \subset
\ZZ^d$.
\item%
$\cW$ is a finite disjoint union of sets~$W_i$, each of which is a
translate $f_i + A_i$ of a (not necessarily normal) affine
semigroup $A_i \subset \ZZ^d$.
\end{enumerate}
\end{thm}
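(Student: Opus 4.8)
The plan is to prove the implications in the cycle $(1)\Rightarrow(2)\Rightarrow(3)\Rightarrow(4)\Rightarrow(5)\Rightarrow(6)\Rightarrow(1)$, so that all six conditions become equivalent; four of the six links are immediate. For $(1)\Rightarrow(2)$ a disjoint union is a union, and the strata of an affine stratification are by definition finitely generated modules. For $(2)\Rightarrow(3)$ one writes each module $W_i=F_i+A_i$ as the finite union $\bigcup_{f\in F_i}(f+A_i)$ of translates of~$A_i$. For $(5)\Rightarrow(6)$ a normal affine semigroup is an affine semigroup and the same disjoint union works. For $(6)\Rightarrow(1)$ each translate $f_i+A_i$ is a finitely generated module, with finite set $\{f_i\}$, so a disjoint union of translates is an affine stratification.

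The implication $(3)\Rightarrow(4)$ is where Lemma~\ref{l:sat} enters: apply it to each affine semigroup~$A_i$ to write $A_i$ as a finite disjoint union of translates of normal affine semigroups, and distribute the translation by~$f_i$. This expresses $\cW$ as a finite (still not necessarily disjoint) union of translates of normal affine semigroups, which is exactly~(4).

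The one substantial step is $(4)\Rightarrow(5)$, where the union must be made disjoint while retaining normal strata. First I would invoke Lemma~\ref{l:union} to rewrite the union so that all the normal affine semigroups share a common sublattice $L\subseteq\ZZ^d$ of finite index; since each stratum lies in a single coset of~$L$ and there are finitely many cosets, it suffices to work one coset at a time and identify that coset with~$L$. Within a coset, $\cW$ is precisely the set of points of~$L$ lying in the finite union $\bigcup_i\Pi_i$ of the translated rational cones $\Pi_i=f_i+\RR_+A_i$. I would then apply Lemma~\ref{l:polyhedra} to write $\bigcup_i\Pi_i$ as a disjoint union $\biguplus_k\sigma_k^\circ$ of relative interiors of rational polyhedra and intersect with~$L$; by Lemma~\ref{l:module}, each resulting piece $\sigma_k^\circ\cap L$ is a finitely generated module over the normal affine semigroup obtained by intersecting the recession cone of~$\sigma_k$ with~$L$.

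This already produces a \emph{disjoint} decomposition of~$\cW$ into finitely generated modules over normal affine semigroups, and hence an affine stratification; but condition~(5) demands the stronger conclusion that each stratum be a \emph{single} translate of a normal affine semigroup, and this refinement is the step I expect to be the main obstacle. The geometric lemmas only deliver module strata, because a cell~$\sigma_k$ with positive-dimensional bounded part has relative-interior lattice points requiring several generators over its recession semigroup, so each module $\sigma_k^\circ\cap L=F_k+N_k$ must be broken into a finite disjoint union of single translates of normal affine semigroups. I would obtain this by running the toric-filtration argument of Lemma~\ref{l:sat} on the module rather than on the semigroup ring: one checks that the relevant finely graded $\CC[N_k]$-module is toric, so it admits a filtration whose successive quotients are $\ZZ^d$-graded translates of face semigroup rings of~$N_k$, and faces of a normal affine semigroup are again normal. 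Collecting these translates over all cells~$\sigma_k$ and all cosets of~$L$ yields the required finite disjoint union of translates of normal affine semigroups, completing $(4)\Rightarrow(5)$ and hence the cycle.
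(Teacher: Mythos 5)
Your proposal is correct and follows essentially the same route as the paper: the same cycle of implications with $3\Rightarrow 4$ handled by Lemma~\ref{l:sat}, and $4\Rightarrow 5$ via Lemma~\ref{l:union}, a coset-by-coset reduction, Lemmas~\ref{l:polyhedra} and~\ref{l:module}, and a toric filtration whose successive quotients are translates of faces of a normal affine semigroup (hence normal). The only difference is presentational: you separate the ``module strata'' stage from the filtration refinement, which the paper folds into a single appeal to the argument of Lemma~\ref{l:sat}.
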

\begin{proof}
By definition it follows that 1 $\implies$ 2 $\implies$ 3 and that 5
$\implies$ 6 $\implies$ 1.  It therefore remains only to show that 3
$\implies$ 4 $\implies$ 5.  The first of these implications is
Lemma~\ref{l:sat}.

For the second, begin by choosing the union to satisfy the conclusion
of Lemma~\ref{l:union}.  For each $\lambda \in \ZZ^d/L$, let
$\cW_\lambda = \cW \cap (\lambda + L)$ be the intersection of~$\cW$
with the corresponding coset of~$L$ in~$\ZZ^d$.  Then $\cW_\lambda$ is
the intersection of $\lambda + L$ with the union $U_\lambda$ of those
polyhedra $f_i + \RR_+ A_i$ for which $f_i \in \lambda + L$.  By
Lemma~\ref{l:polyhedra}, it suffices to show that if $W$ is the
intersection of~$L$ with the relative interior of a polyhedron, then
$W$ possesses an affine stratification in which every stratum is a
translate of a normal affine semigroup.  Replacing $L$ with~$\ZZ^d$,
we may as well assume that $L = \ZZ^d$.  Lemma~\ref{l:module} implies
that $W$ is a finitely generated module over a normal affine
semigroup~$A$.  Thus the vector space over~$\CC$ with basis~$W$
constitutes a finitely generated $\ZZ^d$-graded submodule $M \subseteq
\CC[\ZZ^d]$ over the affine semigroup ring~$\CC[A]$.  The result now
follows by a simpler version of the argument in the proof of
Lemma~\ref{l:sat}, using a toric filtration: in the present case,
dimension is not an issue, and $M_j/M_{j-1}$ is already a
$\ZZ^d$-graded translate of a normal affine semigroup ring, because
every face of~$A$ is normal.
\end{proof}

\begin{cor}\label{c:affine}
Fix a linear map $\varphi: \ZZ^n \to \ZZ^d$.  If\/ $\cW \subseteq
\ZZ^n$ possesses an affine stratification then so does $\varphi(\cW)
\subseteq \ZZ^d$.
\end{cor}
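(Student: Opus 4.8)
The plan is to reduce immediately to the non-disjoint union characterization supplied by Theorem~\ref{t:equiv}, since that is the form of an affine stratification that behaves well under the map~$\varphi$. First I would invoke the hypothesis that $\cW$ possesses an affine stratification and apply the implication $1 \implies 2$ of Theorem~\ref{t:equiv} to write $\cW = \bigcup_{i=1}^r W_i$ as a finite (not necessarily disjoint) union in which each $W_i = F_i + A_i$ is a finitely generated module for an affine semigroup $A_i \subseteq \ZZ^n$, with $F_i \subseteq \ZZ^n$ finite.

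The central computation is that $\varphi$ commutes with unions and with the module structure. Since $\varphi$ is a group homomorphism, it is in particular additive, so
$$
  \varphi(\cW) = \bigcup_{i=1}^r \varphi(W_i)
  = \bigcup_{i=1}^r \bigl(\varphi(F_i) + \varphi(A_i)\bigr).
$$
Here $\varphi(F_i) \subseteq \ZZ^d$ is again a finite set, being the image of a finite set. Moreover, if $A_i$ is generated as a monoid by $a_1, \dots, a_k \in \ZZ^n$, then $\varphi(A_i)$ is generated by $\varphi(a_1), \dots, \varphi(a_k) \in \ZZ^d$; hence $\varphi(A_i)$ is a finitely generated submonoid of~$\ZZ^d$, i.e. an affine semigroup. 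Consequently each $\varphi(W_i) = \varphi(F_i) + \varphi(A_i)$ is a finitely generated module for the affine semigroup $\varphi(A_i) \subseteq \ZZ^d$, and $\varphi(\cW)$ is exhibited as a finite union of such modules.

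To finish, I would apply the reverse implication $2 \implies 1$ of Theorem~\ref{t:equiv} to $\varphi(\cW)$, concluding that it possesses an affine stratification. The one point requiring care—and the reason Theorem~\ref{t:equiv} is exactly the right tool here—is that $\varphi$ need not preserve disjointness: distinct strata $W_i$ can have overlapping images under~$\varphi$. The main obstacle is thus sidestepped rather than confronted, because the equivalence of the disjoint and non-disjoint formulations in Theorem~\ref{t:equiv} lets us pass freely from the overlapping union $\bigcup_i \varphi(W_i)$ back to a genuine affine stratification without having to resolve the overlaps by hand.
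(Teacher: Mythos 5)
Your proof is correct and takes essentially the same approach as the paper: both push the strata forward under $\varphi$, observe that the building blocks map to building blocks of the same kind, and use the equivalences of Theorem~\ref{t:equiv} to absorb the fact that the images need no longer be disjoint. The only cosmetic difference is that the paper routes through translates of affine semigroups (conditions 6 and 3 of Theorem~\ref{t:equiv}) while you route through finitely generated modules (conditions 1 and 2); the underlying idea is identical.
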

\begin{proof}
The image of any translate of an affine semigroup in~$\ZZ^n$ is a
translate of an affine semigroup in~$\ZZ^d$, so use a stratification
of~$\cW$ as in Theorem~\ref{t:equiv}.6: nothing guarantees that the
images of the strata are disjoint, but that is irrelevant by
Theorem~\ref{t:equiv}.3.
\end{proof}

\begin{cor}\label{c:union}
If each of finitely many given subsets of\/~$\ZZ^d$ possesses an
affine stratification, then so does their union.
\end{cor}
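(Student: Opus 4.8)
The plan is to reduce the union of two (hence finitely many, by induction) sets each possessing an affine stratification to the characterization already established in Theorem~\ref{t:equiv}.  The key observation is that condition~3 of that theorem---being a finite, \emph{not necessarily disjoint}, union of translates of affine semigroups---is manifestly closed under taking finite unions, whereas Definition~\ref{d:strat} (a \emph{disjoint} union) is not obviously so.  So the strategy is to pass to the non-disjoint characterization, combine, and pass back.

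Concretely, let $\cW_1, \ldots, \cW_k$ be the given subsets of~$\ZZ^d$, each possessing an affine stratification.  First I would invoke the implication 1~$\implies$~3 of Theorem~\ref{t:equiv} for each~$\cW_m$ separately, writing each $\cW_m$ as a finite union of translates $f_i + A_i$ of affine semigroups.  Then the union $\bigcup_{m=1}^k \cW_m$ is simply the union of all these translates taken together, which is again a finite union of translates of affine semigroups---that is, $\bigcup_m \cW_m$ satisfies condition~3.  Finally I would apply the implication 3~$\implies$~1 of Theorem~\ref{t:equiv} to conclude that $\bigcup_m \cW_m$ possesses an affine stratification.

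There is essentially no obstacle here: the entire content has been front-loaded into Theorem~\ref{t:equiv}, whose whole point was to trade the rigid disjointness requirement of an affine stratification for the flexible non-disjoint union in condition~3, precisely so that closure properties like this one become transparent.  The only thing to verify is the trivial set-theoretic fact that a finite union of finite collections of sets is again a finite collection of sets, so that the combined union still has only finitely many constituent translates.  I would therefore expect the proof to be a single short sentence citing Theorem~\ref{t:equiv}, in the same spirit as Corollary~\ref{c:affine} just above, which similarly shuttles between conditions~3 and~6 of that theorem to dispose of a disjointness nuisance.
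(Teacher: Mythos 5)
Your proposal is correct and matches the paper's proof, which reads in its entirety: ``Use the equivalence of (for example) Theorem~\ref{t:equiv}.1 and Theorem~\ref{t:equiv}.2.'' You shuttle through condition~3 rather than condition~2, but the paper's ``for example'' makes clear that any of the non-disjoint characterizations serves equally well, so this is the same argument.
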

\begin{proof}
Use the equivalence of (for example) Theorem~\ref{t:equiv}.1 and
Theorem~\ref{t:equiv}.2.
\end{proof}

\begin{remark}
The reason for choosing Definition~\ref{d:strat} as the fundamental
concept instead of the other conditions in Theorem~\ref{t:equiv} is
that Definition~\ref{d:strat} likely results in the most efficient
data structure for algorithmic purposes; see \cite{algsCGT}.
\end{remark}

\section{Fibers of affine presentations of commutative monoids}\label{s:fibers}

This section serves as an elementary example of the theory of
mesoprimary decomposition and as a bridge to combinatorial game theory
in the presence of finite mis\`ere quotients considered in
Section~\ref{s:misere}.  The main observation in this section is that
affine stratifications exist for fibers of affine presentations of
arbitrary commutative monoids.  The relevant statement in
Theorem~\ref{t:strat} requires no additional background, but its more
detailed final claim invokes notions from \emph{mesoprimary
decomposition} \cite{mesoprimary}.  The prerequisites for the
statement are \cite[Definitions~2.11 and 5.2]{mesoprimary}; the proofs
also need \cite[Definitions~2.8, 3.10, 6.2, 6.5, and~8.1,
Corollary~6.6, and Theorem~8.3]{mesoprimary}.

\begin{lemma}\label{l:L}
To specify a \emph{prime congruence}
\cite[Definition~2.11]{mesoprimary} on an affine~semigroup it is
equivalent to pick a face~$F$ and subgroup $L \subseteq \ZZ F$ of the
universal group~of~$F$.
\end{lemma}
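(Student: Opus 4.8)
The plan is to set up mutually inverse assignments between prime congruences on the affine semigroup $Q$ and pairs $(F,L)$, using the description of a prime congruence in \cite[Definition~2.11]{mesoprimary} as a congruence whose quotient monoid is a group with at most one further element adjoined as an absorbing \emph{nil}.

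Starting from a pair $(F,L)$, I would write down the map $\pi_{F,L}\colon Q \to (\ZZ F/L) \cup \{\infty\}$ into the group $\ZZ F/L$ with an absorbing element~$\infty$ adjoined, sending $a \mapsto \ol a$ when $a \in F$ and $a \mapsto \infty$ when $a \notin F$, and take $\sim_{F,L}$ to be its kernel congruence. The one point requiring thought is that $\pi_{F,L}$ is a homomorphism, and its only nontrivial instance---$a \in F$ while $b \notin F$---is exactly where the face axiom intervenes: were $a+b \in F$, the face property would force $b \in F$, so in fact $a + b \notin F$ and both sides of the homomorphism identity collapse to~$\infty$. The quotient $Q/{\sim_{F,L}}$ then has the group-with-nil shape required by \cite[Definition~2.11]{mesoprimary}, so $\sim_{F,L}$ is prime.

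Conversely, from a prime congruence $\sim$ with quotient $M = Q/{\sim}$, I would let $U \subseteq M$ be the group of units and take $F \subseteq Q$ to be its preimage. That $F$ is a face is immediate: it is a submonoid, and if $a+b$ maps into $U$ then, $M$ being commutative, the images of $a$ and of $b$ are each invertible, so $a, b \in F$. Since $F$ is a face of the affine semigroup $Q$ it is cancellative with universal group $\ZZ F$, so the surjection $F \onto U$ extends uniquely to a surjection $\ZZ F \onto U$; I then define $L$ to be its kernel, a subgroup of $\ZZ F$.

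It remains to see the two assignments are inverse. One composite returns $(F,L)$ because the units of $(\ZZ F/L)\cup\{\infty\}$ form exactly $\ZZ F/L$, whose $\pi_{F,L}$-preimage is $F$ and along $\ZZ F$ has kernel $L$. For the other composite, the defining feature of a prime congruence---that $M$ is a group with a single adjoined nil---means $M$ is reconstructed from $U \cong \ZZ F/L$ together with the face $F$, so that $\sim$ coincides with $\sim_{F,L}$. I expect the main obstacle to be none of these routine verifications individually but rather reconciling them with the exact formulation of \cite[Definition~2.11]{mesoprimary}: one must confirm both that the constructed $\sim_{F,L}$ meets that definition and that every prime congruence collapses all of $Q \minus F$ to a single nil, so that the pair $(F,L)$ loses no information.
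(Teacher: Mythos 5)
There is a genuine gap, and it sits exactly where you flagged your residual worry: the formulation of \cite[Definition~2.11]{mesoprimary}. A prime congruence there is one whose quotient is an \emph{integral} monoid, meaning its non-nil elements form a \emph{cancellative} monoid --- not necessarily a group --- possibly with a nil adjoined. Your reading (group with nil adjoined) cannot be the intended one, because it would make the lemma itself false: the pair $(F,L)=(\NN,\{0\})$ for $A=\NN$ must correspond to some prime congruence, and the only candidate --- the trivial congruence, which is what your own forward map produces --- has quotient $\NN$, which is neither a group nor a group with nil. This is also why the paper's proof \emph{localizes} along $F$ before speaking of a group homomorphism and its kernel: the non-nil part of the quotient only becomes a group after inverting the elements of $F$.

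The misreading makes your two assignments inconsistent with each other, so they fail to be mutually inverse under either reading of the definition. Your forward construction $\pi_{F,L}$ and its kernel congruence are correct (the face-axiom check is fine), but the quotient is the \emph{image} of $\pi_{F,L}$, namely the image of $F$ in $\ZZ F/L$ with $\infty$ adjoined; that image is a cancellative submonoid of $\ZZ F/L$ but is a group only when $F+L=\ZZ F$. So under your reading the constructed congruence is usually not prime, while under the correct reading it is. In the backward direction you take $F$ to be the preimage of the \emph{unit group} of $M=Q/{\sim}$, whereas it must be the preimage of \emph{all non-nil} elements; these differ exactly when the non-nil part of $M$ is not a group. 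Concretely, for $Q=\NN$ and $(F,L)=(\NN,\{0\})$, your forward map gives the trivial congruence and your backward map applied to it returns $(\{0\},\{0\})$, not $(\NN,\{0\})$. The repair: let $F$ be the non-nil preimage (a face, since nil is absorbing and the non-nil elements of an integral monoid form a submonoid); then $Q\minus F$ is a single class by construction, so your final worry evaporates, and the induced congruence on $F$ has cancellative quotient, whence $L=\{a-b : a,b\in F,\ a\sim b\}$ is a subgroup of $\ZZ F=F-F$ (closure under addition follows from additivity of $\sim$, and conversely $a-b\in L$ forces $a\sim b$ by cancellativity); equivalently, $L$ is the kernel of $\ZZ F\to (Q/{\sim})_F$, which is precisely the paper's own (terse, one-directional) proof. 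With these corrected assignments your overall plan of exhibiting mutually inverse maps does go through.
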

\begin{proof}
The face~$F$ is the set of elements of the ambient affine
semigroup~$A$ whose images are not \emph{nil}
\cite[Definition~2.8]{mesoprimary}.  The subgroup $L$ is the kernel of
the group homomorphism obtained by \emph{localizing} the monoid
morphism $F \to A/\til$ along~$F$ \cite[Definition~3.10]{mesoprimary},
where $\til$ is the prime congruence in question.
\end{proof}

\begin{defn}\label{d:assoc}
A lattice $L \subseteq \ZZ A$ is \emph{associated} to a
congruence~$\til$ on an affine semigroup~$A$ if~$\til$ has an
associated prime congruence \cite[Definition~5.2]{mesoprimary}
specified by $L \subseteq \ZZ F$ for some face~$F$ as in
Lemma~\ref{l:L}, and in this case $F$ is an \emph{associated face}
of~$\til$.
\end{defn}

\begin{thm}\label{t:strat}
If\/ $\varphi: A \onto Q$ is a surjection of a commutative monoids
with $A$ an affine semigroup, then every fiber of~$\varphi$ possesses
an affine stratification.  Moreover, the stratification can be chosen
so that each of its affine semigroups is $L \cap A$ for some
intersection $L$ of associated lattices of the congruence
defining~$\varphi$.
\end{thm}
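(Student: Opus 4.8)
The plan is to reduce the statement to the combinatorics of a single mesoprimary congruence and then to invoke the module-theoretic Lemma~\ref{l:module} together with the equivalences of Theorem~\ref{t:equiv}. Let $\til$ be the congruence on $A$ with $A/\til = Q$, so that the fibers of $\varphi$ are exactly the $\til$-classes. First I would apply the existence of mesoprimary decomposition \cite[Theorem~8.3]{mesoprimary} to write $\til = \bigcap_i \til_i$ as a finite intersection of mesoprimary congruences, each $\til_i$ carrying a unique associated prime, specified by a face $F_i$ and a lattice $L_i \subseteq \ZZ F_i$ as in Lemma~\ref{l:L}; by Definition~\ref{d:assoc} the lattices $L_i$ are precisely the associated lattices of $\til$. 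Since $a \til b$ if and only if $a \til_i b$ for every~$i$, each $\til$-class is the intersection of the $\til_i$-classes through any of its elements, so it suffices to understand the individual $\til_i$-classes and how they intersect.

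The key structural input is the description of the classes of a single mesoprimary congruence furnished by \cite[Corollary~6.6]{mesoprimary}, read through \cite[Definitions~6.2, 6.5, and~8.1]{mesoprimary}: each non-nil class of $\til_i$ lies in a single coset of the associated lattice $L_i$, cut down by the monomial (order-ideal) conditions transverse to the face $F_i$, whereas the nil class is a monoid ideal and hence imposes no coset condition. Fixing a fiber, I would collect the set $S$ of indices $i$ for which the fiber is non-nil; intersecting the corresponding coset conditions pins the fiber into a single coset $g + L$ of the intersected lattice $L = \bigcap_{i \in S} L_i$, which is an intersection of associated lattices, while the remaining (nil) components contribute only finitely many monoid-ideal constraints. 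The fiber is therefore the set of points of $A$ lying in the fixed coset $g + L$ and satisfying finitely many such constraints. Because a monoid ideal is a finite union of translates of $A$, and because $(g + L) \cap A$ is a finitely generated module over $L \cap A$ by Lemma~\ref{l:module} (exactly as invoked in the proof of Lemma~\ref{l:union}), the fiber is a finite union of finitely generated modules over semigroups of the form $L \cap A$ with $L$ an intersection of associated lattices. Existence of an affine stratification is then immediate from the equivalence of conditions (1) and (2) in Theorem~\ref{t:equiv}.

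For the sharper ``moreover'' claim I would promote this covering to a genuine partition whose strata retain the form $L \cap A$. The guiding observation is that the intersection of two coset-slices $(g + L) \cap A$ and $(g' + L') \cap A$ is again a coset-slice $(h + (L \cap L')) \cap A$ of the intersected lattice, and $L \cap L'$ is still an intersection of associated lattices; thus inclusion--exclusion applied to the covering produces pieces indexed by subsets of~$S$, each an intersection of~$A$ with a coset of an intersected associated lattice and a polyhedron. Subdividing these via Lemma~\ref{l:polyhedra} and applying Lemma~\ref{l:module} coset by coset, exactly as in the final step of the proof of Theorem~\ref{t:equiv} but keeping the lattice fixed rather than passing to the saturation, yields a disjoint union of finitely generated modules over semigroups $L \cap A$, which is the desired affine stratification.

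The step I expect to be the main obstacle is extracting and stating the class description precisely from mesoprimary decomposition: one must match the coset constraints of the non-nil components with the associated lattices of $\til$ and correctly reconcile them with the monoid-ideal behavior of the nil components, so that the lattice $L$ governing any given fiber is visibly an intersection of associated lattices. The polyhedral and module-theoretic bookkeeping needed to turn the resulting (possibly overlapping) cover into an honest partition without leaving the class of semigroups $L \cap A$ is routine by comparison, resting entirely on Lemmas~\ref{l:module} and~\ref{l:polyhedra} and the equivalences of Theorem~\ref{t:equiv}.
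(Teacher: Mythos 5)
Your overall architecture matches the paper's: fix a mesoprimary decomposition $\til\,=\bigcap_i\til_i$, realize each fiber as the intersection of the mesoprimary classes through it, exploit coset structure for the non-nil components and ideal structure for the nil ones, and finish with Lemma~\ref{l:module}-style finiteness over $L\cap A$. The first genuine gap is your ``key structural input'': the claim that each non-nil class of $\til_i$ lies in a \emph{single} coset of $L_i$ is false. Take $A=\NN^2$ and the congruence induced by the binomial ideal $\langle x-y,\,x^2\rangle$: its classes are $\{\0\}$, $\{(1,0),(0,1)\}$, and the nil class $\{(a,b):a+b\geq 2\}$. This congruence is primary with associated face $F=\{\0\}$ (both generators are nilpotent), and the semifreeness and orbit-finiteness conditions of \cite[Corollary~6.6]{mesoprimary} hold trivially since the quotient is finite; so it is mesoprimary, and its associated lattice is forced to be $L=\{0\}\subseteq\ZZ F=\{0\}$. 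Yet the non-nil class $\{(1,0),(0,1)\}$ is a union of \emph{two} cosets of $L$. This is precisely why the paper proves Proposition~\ref{p:Lbounded}, whose conclusion is only that each class is a \emph{finite union} of sets $(\aa+L_i)\cap A$. Your argument survives this correction, since intersecting finite unions of cosets of the $L_i$ over $i\in S$ still yields a finite union of cosets of $L=\bigcap_{i\in S}L_i$; but as stated your structural claim is wrong, and you also never treat the fiber over the nil of $Q$, where $S=\emptyset$ and there is no coset condition at all (the paper handles that fiber separately via Lemma~\ref{l:stanley}).

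The second gap is the disjointification scheme in your third paragraph, which is both flawed and unnecessary. Flawed: turning a cover into a partition requires the cells $\bigcap_{i\in T}W_i\minus\bigcup_{j\notin T}W_j$, i.e.\ set \emph{differences}, which your pairwise-intersection bookkeeping does not produce and which are not coset-slices; Lemma~\ref{l:polyhedra} lives in the world of real polyhedra and cannot see a non-normal $A$ (the theorem does not assume $A$ normal, so translates $\aa_j+A$ are not sets of lattice points of polyhedra); and the finite-index maneuver inside Lemma~\ref{l:union}, which your ``exactly as in Theorem~\ref{t:equiv}'' step would invoke, replaces $L$ by a strictly larger lattice, destroying exactly the form ``intersection of associated lattices'' that the Moreover clause demands. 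Unnecessary: within one fiber all coset conditions involve the \emph{single} lattice $L=L_1\cap\cdots\cap L_k$, and distinct cosets of one lattice are automatically disjoint; within one coset nothing needs disjointifying, because $(\uu+L)\cap q''$, where $q''$ is the ideal cut out by the nil components, is a \emph{single} finitely generated module over $L\cap A$. That is the paper's Lemma~\ref{l:ideal}, resting on the observation that a finite union of finitely generated modules over the same semigroup is again one finitely generated module, together with the fact that Definition~\ref{d:strat} admits modules, not merely translates, as strata. Hence the fiber decomposes as $q=\biguplus_{\uu}\bigl[(\uu+L)\cap A\cap q''\bigr]$, already a disjoint union of finitely generated modules over $L\cap A$; no inclusion--exclusion, no Lemma~\ref{l:polyhedra}, and no appeal to Theorem~\ref{t:equiv} is needed, and it is this observation, which your route misses, that secures the Moreover statement.
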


The proof, included after Proposition~\ref{p:Lbounded}, requires some
preliminary results on affine stratifications in simpler situations
than Theorem~\ref{t:strat}.

\begin{lemma}\label{l:stanley}
If $M$ is an ideal in an affine semigroup~$A$, then $M$ possesses an
affine stratification in which each stratum is a translate $f_i + A_i$
of a face $A_i$ of~$A$.
\end{lemma}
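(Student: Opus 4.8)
The plan is to translate the statement into the combinatorial commutative algebra of affine semigroup rings and then invoke the toric filtration machinery, exactly as in the proof of Lemma~\ref{l:sat} but in a streamlined form. First I would pass from the ideal $M \subseteq A$ to its associated monomial ideal: let $I \subseteq \CC[A]$ be the $\CC$-vector subspace with basis $\{\xx^m : m \in M\}$. Because $M$ is an ideal of~$A$---that is, $A + M \subseteq M$---the subspace $I$ is stable under multiplication by~$\CC[A]$, so it is a $\ZZ^d$-graded ideal of~$\CC[A]$, and hence a finitely generated $\ZZ^d$-graded $\CC[A]$-module, since $\CC[A]$ is Noetherian (equivalently, $M$ is a finitely generated $A$-ideal).

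The key step is then to observe that $I$ is \emph{toric} in the sense of \cite[Definition~4.5]{rankJumps}, by \cite[Example~4.7]{rankJumps}, just as in Lemma~\ref{l:sat}. Consequently $I$ admits a toric filtration $0 = M_0 \subset M_1 \subset \cdots \subset M_\ell = I$ whose successive quotients $M_j/M_{j-1}$ are $\ZZ^d$-graded translates $\xx^{\aa_j}\CC[F_j]$ of the affine semigroup rings of faces $F_j$ of~$A$. Transferring this filtration back to the language of lattice points yields the disjoint union $M = \biguplus_j (\aa_j + F_j)$, which is precisely an affine stratification of~$M$ in which each stratum $\aa_j + F_j$ is a translate of a face of~$A$.

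The reason this argument is genuinely easier than Lemma~\ref{l:sat} is that neither induction on dimension nor passage to the saturation is required: here we are content with strata that are translates of faces of~$A$, and the successive quotients of a toric filtration are already of exactly this form. So I do not anticipate a serious obstacle. The one point to confirm carefully is that the faces $F_j$ produced by the filtration are faces of the \emph{given} semigroup~$A$, and that $I$ meets the finiteness and grading hypotheses needed to quote \cite[Example~4.7]{rankJumps}; both are built into the toric setup for $\CC[A]$-modules and follow from $M$ being a finitely generated ideal of the affine semigroup~$A$.
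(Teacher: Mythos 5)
Your proposal is correct and follows essentially the same route as the paper: the paper's proof is precisely to replace the module in the toric-filtration argument of Lemma~\ref{l:sat} by the $\CC[A]$-module $\CC\{M\}$ with the ideal $M \subseteq A$ as a $\CC$-basis (your monomial ideal~$I$), and read off the strata $\aa_j + F_j$ from the successive quotients. Your added observation that no induction on dimension or passage to the saturation is needed---because translates of (not necessarily normal) faces of~$A$ suffice here---is exactly why the paper's proof can be a one-liner.
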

\begin{proof}
Use a toric filtration as in the proof of Lemma~\ref{l:sat}, where $M$
there is replaced by the $\CC[A]$-module $\CC\{M\}$ that has the ideal
$M \subseteq A$ as a vector space basis over~$\CC$.
\end{proof}

\begin{example}\label{e:stanley}
Every ideal in $\NN^n$ has a \emph{Stanley decomposition}
\cite{Sta82}: an expression as a finite disjoint union of translates
of faces~$\NN^J$ of\/~$\NN^n$ for $J \subseteq \{1,\ldots,n\}$; see
\mbox{\cite[\S2]{cmMonomial}}.
\end{example}

\begin{lemma}\label{l:ideal}
Fix a normal affine semigroup~$A$.  The intersection $(\uu + L) \cap
M$ of any coset of a lattice $L \subseteq \ZZ A$ with an ideal $M
\subseteq A$ is a finitely generated module for~$L \cap A$.
\end{lemma}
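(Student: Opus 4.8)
The plan is to realize $\cW := (\uu + L) \cap M$ as a submodule of a finitely generated module over $B := L \cap A$ and then invoke Noetherianity. First I would dispose of the trivial case: if $\cW = \emptyset$ there is nothing to prove, so assume $\cW \neq \emptyset$; any point of $\cW \subseteq M \subseteq \ZZ A$ differs from~$\uu$ by an element of $L \subseteq \ZZ A$, so $\uu \in \ZZ A$ and hence $\uu + L \subseteq \ZZ A$. Writing $C = \RR_+ A$ for the cone generated by~$A$, normality gives $A = C \cap \ZZ A$, and since $L \subseteq \ZZ A$ this yields $B = L \cap A = L \cap C$. The set~$B$ is itself a normal affine semigroup, namely $B = (\RR L \cap C) \cap L$, the intersection of the rational cone $D := \RR L \cap C$ with the lattice~$L$; in particular the affine semigroup ring $\CC[B]$ is Noetherian.

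Next I would observe that $\cW$ is a $B$-module sitting inside an explicit finitely generated one. Because $M$ is an ideal of~$A$ and $B \subseteq A$, addition by an element of $B \subseteq L$ preserves both membership in~$M$ and membership in the coset $\uu + L$; hence $\cW$ is closed under the action of~$B$. Moreover $\cW \subseteq N := (\uu + L) \cap A = (\uu + L) \cap C$, the last equality again using $\uu + L \subseteq \ZZ A$. The module $N$ is finitely generated over~$B$: translating by~$-\uu$ identifies $N - \uu$ with the lattice points $L \cap \Pi$ of the rational polyhedron $\Pi := \RR L \cap (C - \uu)$, whose recession cone is exactly~$D$, so Lemma~\ref{l:module}---applied inside the ambient space $\RR L$ with lattice~$L$---shows that $L \cap \Pi$ is a finitely generated module over $D \cap L = B$.

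Finally I would conclude by Noetherian finiteness. Passing to monoid algebras, $\CC\{N\}$ is a finitely generated $\CC[B]$-module and $\CC\{\cW\}$ is a $\ZZ^d$-graded $\CC[B]$-submodule of it; since $\CC[B]$ is Noetherian, $\CC\{\cW\}$ is finitely generated, and therefore so is the $B$-module~$\cW$.

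I expect the main obstacle to be the bookkeeping in the second paragraph: correctly applying Lemma~\ref{l:module} after restricting from $\ZZ^d$ to the sublattice~$L$ and from $\RR^d$ to~$\RR L$, and verifying that the recession cone of the sliced polyhedron~$\Pi$ is precisely $D = \RR L \cap C$, so that the relevant module semigroup comes out to be $B = L \cap A$ rather than some smaller face. Normality of~$A$ is used exactly once but crucially, to replace intersection with~$A$ by intersection with its cone~$C$, which is what makes the polyhedral Lemma~\ref{l:module} applicable.
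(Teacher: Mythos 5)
Your proof is correct, but it handles the ideal $M$ by a genuinely different mechanism than the paper. The paper's own proof is two lines: since $M$ is a finitely generated ideal of~$A$, it is a finite union of translates $\aa_j + A$ over its generators~$\aa_j$, so $(\uu + L) \cap M = \bigcup_j (\uu+L) \cap (\aa_j + A)$, and each piece is a finitely generated $(L \cap A)$-module directly by Lemma~\ref{l:module}; a finite union of finitely generated modules is again finitely generated. You never invoke finite generation of~$M$ at all: you use the ideal property only to see that $\cW = (\uu+L) \cap M$ is stable under $B = L \cap A$, then trap $\cW$ inside $N = (\uu + L) \cap A$, apply Lemma~\ref{l:module} once (to the sliced polyhedron $\RR L \cap (C - \uu)$ inside~$\RR L$) to get $N$ finitely generated over~$B$, and finish by Noetherian descent, since $\CC\{\cW\}$ is a graded $\CC[B]$-submodule of the finitely generated module $\CC\{N\}$. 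The trade-off: the paper's route is shorter but quietly relies on the fact that ideals of affine semigroups are finitely generated (itself a Noetherian-type input); your route replaces that input with Noetherianity of $\CC[B]$, at the cost of the nonemptiness and lattice-restriction bookkeeping you flag, all of which checks out (in particular, nonemptiness of $\cW$ is exactly what guarantees $\uu + L \subseteq \ZZ A$ and that the recession cone computation $\mathrm{rec}\bigl(\RR L \cap (C - \uu)\bigr) = \RR L \cap C$ is valid). Both arguments pivot on the same key lemma, Lemma~\ref{l:module}, and both use normality of~$A$ in the same essential way: to replace intersection with~$A$ (or a translate of it) by intersection with a rational polyhedron.
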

\begin{proof}
The ideal $M$ is finitely generated, and the intersection $(\uu + L)
\cap (\aa + A)$ is finitely generated as a module over $L \cap A$ for
any $\aa \in A$ by Lemma~\ref{l:module}.
\end{proof}

\begin{prop}\label{p:Lbounded}
If $L$ is the associated lattice of a mesoprimary congruence on an
affine semigroup~$A$, then every congruence class is a finite union of
sets $(\aa + L) \cap A$.
\end{prop}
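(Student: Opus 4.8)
The plan is to read the congruence classes off the combinatorial description of a mesoprimary congruence and then recognize each one as a coset of $L$ meeting an ideal, so that Lemma~\ref{l:ideal} does the bookkeeping. Write $\sim$ for the given mesoprimary congruence, and let its unique associated prime be specified, as in Lemma~\ref{l:L}, by a face $F \subseteq A$ together with the associated lattice $L \subseteq \ZZ F$ of Definition~\ref{d:assoc}; this is the lattice in the statement. Since Lemma~\ref{l:ideal} requires a normal semigroup, I would first arrange, by intersecting with the cosets of a fixed finite-index sublattice as in the proof of Lemma~\ref{l:union}, that it suffices to treat the pieces $\uu + (L' \cap A)$ one at a time, each of which is normal; the claim for $\sim$ follows by reassembling these pieces.

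Next I would invoke the structure theory of \cite{mesoprimary}. By \cite[Corollary~6.6]{mesoprimary}, read through the notions of nil element \cite[Definition~2.8]{mesoprimary} and localization \cite[Definition~3.10]{mesoprimary}, two elements of $A$ are $\sim$-congruent precisely when, after localizing along $F$, they agree modulo $L$, and in addition they lie in a common order filter dictated by the mesoprimary datum \cite[Definitions~6.2 and~8.1]{mesoprimary}. The cancellative half of this statement — that $\sim$ identifies elements only up to an element of $L$ in the $F$-directions — is exactly what confines each class to a single coset $\aa + L$, while the remaining, non-cancellative directions are absorbed into an ideal $M \subseteq A$. The goal of this step is therefore to present each congruence class $C$ as a finite union of sets of the form $(\aa + L) \cap M$.

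Granting that presentation, Lemma~\ref{l:ideal} completes the argument at once: the intersection $(\aa + L) \cap M$ of a coset of $L \subseteq \ZZ A$ with an ideal $M \subseteq A$ is a finitely generated module over $L \cap A$, hence a finite disjoint union of translates of $L \cap A$, which is in particular a finite union of sets $(\aa_i + L) \cap A$. That only finitely many pieces occur is guaranteed because $A$ is finitely generated and the mesoprimary datum governing $\sim$ is finite by \cite[Theorem~8.3]{mesoprimary}, so a single class can meet only boundedly many cosets of~$L$.

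The hard part will be extracting the precise shape $C = (\aa + L) \cap M$ from \cite[Corollary~6.6]{mesoprimary}: one must check that $\sim$ is genuinely $L$-periodic along its cancellative directions and that \emph{all} of its non-cancellative behaviour is encoded by a single ideal~$M$ to which Lemma~\ref{l:ideal} applies. The delicate directions are the nil ones, where a class is an entire ideal rather than a lone coset and where distinct cosets of $L$ can be identified; it is precisely the content of Lemma~\ref{l:ideal} — that such an ideal meets each coset of $L$ in a finitely generated $L \cap A$-module — that keeps the number of coset-pieces finite there, and verifying that the finite mesoprimary datum of \cite[Theorem~8.3]{mesoprimary} bounds this identification is the crux of the proof.
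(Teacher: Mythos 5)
Your proposal has two concrete gaps, and the second one is fatal to the entire strategy. First, the opening reduction to normality is invalid: intersecting a non-normal affine semigroup with a finite-index sublattice need not yield a normal semigroup. For instance, if $A \subseteq \ZZ^2$ is generated by $(1,0),(1,1),(1,3),(1,4)$ and $L' = \ZZ(1,0) + \ZZ(0,2)$, then $(1,2)$ lies in the real cone and in the universal group of $A \cap L'$ but not in $A \cap L'$, so $A \cap L'$ is not normal. Second, and more seriously, your final step asserts that a finitely generated module over $L \cap A$ --- that is, a finite union of translates $\aa_i + (L \cap A)$ --- ``is in particular a finite union of sets $(\aa_i + L) \cap A$.'' This implication is false, and it fails in exactly the way that matters: $\aa + (L \cap A)$ is in general a \emph{proper} subset of $(\aa + L) \cap A$. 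Take $A = \NN^2$ and $L = \ZZ(1,0)$; the translate $(1,0) + (L \cap A) = \{(k,0) : k \geq 1\}$ is not a union of sets $(\aa + L) \cap A$, because the only coset of $L$ it meets intersects $A$ in $\{(k,0) : k \geq 0\}$, which also contains the origin. The entire content of the proposition is this stronger, exact statement: each class is a union of \emph{full} coset-intersections, i.e.\ classes are stable under translation by $L$ (after localizing along the associated face). No module-finiteness argument via Lemma~\ref{l:ideal} can manufacture that stability; it is a structural property of mesoprimary congruences. You effectively concede this when you defer ``the hard part'' to an unperformed extraction from \cite[Corollary~6.6]{mesoprimary}, so by your own account the crux is missing.

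For comparison, the paper's proof obtains the stability directly and needs none of your scaffolding. Since the congruence is mesoprimary with associated face $F$, the localization morphism $Q \to Q_F$ is injective, so it suffices to analyze each class as a subset of $A_F = A + \ZZ F$. There, semifreeness (\cite[Corollary~6.6]{mesoprimary}) says the $\ZZ F$-stabilizer of every non-nil element of $Q_F$ is exactly $L$; hence each non-nil class, viewed in $A_F$, meets each coset of $\ZZ F$ in precisely one full coset of $L$, and finiteness of the number of $F$-orbits (same corollary) bounds the number of cosets involved. Intersecting back with $A$ gives the statement --- no ideal $M$, no Lemma~\ref{l:ideal}, and no normality hypothesis anywhere. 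Note also that your architecture inverts the paper's logical order: Lemma~\ref{l:ideal} plays no role in Proposition~\ref{p:Lbounded}; it enters only afterwards, in the proof of Theorem~\ref{t:strat}, to intersect the coset-unions produced by this proposition with the ideal coming from nil classes (which are themselves handled by Lemma~\ref{l:stanley}, not by this proposition). Finally, your paraphrase of \cite[Corollary~6.6]{mesoprimary} --- congruent iff equal modulo $L$ after localization, within a common filter --- cannot be right as stated, since a single class is in general a union of several $L$-cosets lying in \emph{different} cosets of $\ZZ F$, so congruent elements need not differ by an element of $\ZZ F$ at all; this is precisely why the finiteness of the number of $F$-orbits is needed.
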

\begin{proof}
Let $F$ be the associated face, and write $Q$ for the quotient of~$A$
modulo the mesoprimary congruence.  Since the localization morphism $Q
\to Q_F$ along~$F$ is injective, it suffices to show that every
element of~$Q_F$, each viewed as a subset of the localization $A_F = A
+ \ZZ F$, is a finite union of cosets of~$L$.  Under the action
of~$\ZZ F$ on~$Q_F$, the stabilizer of every non-nil element $q \in
Q_F$ is~$L$; this is what semifreeness means in
\cite[Corollary~6.6]{mesoprimary}.  Consequently, viewing $q$ as a
subset of~$A_F$, the intersection of~$q$ with any single coset of~$\ZZ
F$ is a single coset of~$L$.  Hence the result follows from finiteness
of the number of $F$-orbits \cite[Corollary~6.6]{mesoprimary}.
\end{proof}

\begin{proof}[Proof of Theorem~\ref{t:strat}]
Let $\til$ be the congruence on~$A$ induced by~$\varphi$.  The fiber
over the nil of~$Q$, if there is one, is an ideal of~$A$, which has an
affine stratification by Lemma~\ref{l:stanley}.  To treat the the
non-nil fibers, fix a mesoprimary decomposition of~$\til$
\cite[Theorem~8.3]{mesoprimary}.  Let $L_i \subseteq \ZZ F_i$ for $i =
1,\ldots,r$ be the lattices associated to~$\til$ with corresponding
associated faces $F_i$ of~$A$.  Write $\oQ_i$ for the quotient of~$A$
modulo the $i^\th$ mesoprimary congruence in the decomposition.

Every class $q \in Q$ is, as a subset of~$A$, the intersection of the
$r$ mesoprimary classes $\ol q_i \in \oQ_i$ containing~$q$, because
$\til$ is the common refinement of its mesoprimary components.
Furthermore, as long as $q$ is not nil, at least one of the
mesoprimary classes~$\ol q_i$ is not nil.  For any such non-nil
class~$\ol q_i$, Proposition~\ref{p:Lbounded} guarantees a finite set
$U_i$ such that $\ol q_i = \bigcup_{\uu \in U_i} (\uu + L_i) \cap A$.
Renumbering for convenience, assume that $\ol q_i$ is non-nil for $i
\leq k$ and nil for $i > k$.  Then $q' := \ol q_1 \cap \cdots \cap \ol
q_k$ is the intersection with~$A$ of a finite union of cosets of the
lattice $L_1 \cap \cdots \cap L_k$, and $q'' := \ol q_{k+1} \cap
\cdots \cap \ol q_r$ is an ideal of~$A$.  Since $q = q' \cap q''$, the
result follows from Lemma~\ref{l:ideal}.
\end{proof}

\section{Lattice games and mis\`ere quotients}\label{s:misere}

Fix a pointed rational convex polyhedron $\Pi \subseteq \RR^d$ with
recession cone~$C$ of dimension~$d$.  The pointed hypothesis means
that $\Pi = P + C$ for some polytope (i.e., bounded convex
polyhedron)~$P$.  Write $\Lambda = \Pi \cap \ZZ^d$ for the set of
integer points in~$\Pi$.  The following definitions summarize
\cite[Definition~2.3, Definition~2.9, and Lemma~3.5]{latticeGames}.

\begin{defn}\label{d:ruleset}
A finite subset $\Gamma \subset \ZZ^d \minus \{\0\}$ is a \emph{rule
set} if
\begin{enumerate}
\item%
there exists a linear function on $\RR^d$ that is positive on
$\Gamma \cup C \minus \{\0\}$; and
\item%
there is finite set $F \subset \Lambda$ such that every position $p
\in \Lambda$ has a \emph{$\Gamma$-path in~$\Lambda$ to~$F$}: a
sequence $p = p_r, \ldots, p_0 \in \Lambda$ with $p_0 \in F$ and $p_k
- p_{k-1} \in \Gamma$ for $k = \{1,\ldots,r\}$.
\end{enumerate}
\end{defn}

For the next definition, it is important to observe that the rule set
$\Gamma$ induces a partial order $\preceq$ on~$\Lambda$ in which $p
\preceq q$ if $q - p$ lies in the monoid $\NN\Gamma$ generated
by~$\Gamma$ \cite[Lemma~2.8]{latticeGames}.  An \emph{order ideal}
under this (or any) partial order~$\preceq$ is a subset $S$ closed
under going down: $p \preceq q$ and $q \in S \implies p \in S$.

\begin{defn}\label{d:latticegame}
Fix a rule set $\Gamma$.
\begin{itemize}
\item%
A \emph{game board} $B$ is the complement in $\Lambda$ of a finite
$\Gamma$-order ideal in $\Lambda$ called the set of \emph{defeated
positions}.
\item%
A \emph{lattice game} $G = (\Gamma,B)$ is defined by a rule
set~$\Gamma$ and a game board~$B$.
\item%
The \emph{$P$-positions} of~$G$ form a subset $\cP \subseteq B$ such
that $(\cP + \Gamma) \cap B = B \minus \cP$.
\item%
An \emph{affine stratification} of~$G$ is an affine stratification of
its set of $P$-positions.
\end{itemize}
\end{defn}

The $P$-positions of~$G$ are uniquely determined by the rule set and
game board \cite[Theorem~4.6]{latticeGames}.

Following Plambeck and Siegel \cite{Pla05,misereQuots}, every lattice
game possesses a unique quotient that optimally collapses $\Lambda$
while faithfully recording the interaction of the $P$-positions with
its additive structure.

\begin{defn}\label{d:cong}
Two positions $p,q \in B$ are \emph{indistinguishable}, written $p
\sim q$, if
$$%
  (p + C) \cap \cP = p - q + (q + C) \cap \cP.
$$
In other words, $p + r \in \cP \iff q + r \in \cP$ for all $r$ in the
recession cone~$C$ of~$B$.  The \emph{mis\`ere quotient} of the
lattice game with winning positions~$\cP$ is the quotient
$\Lambda/\til$ of the polyhedral set~$\Lambda$ modulo
indistinguishability.
\end{defn}

Geometrically, indistinguishability means that the P-positions in the
cone above~$p$ are the same as those above~$q$, up to translation by
$p-q$.  It is elementary to verify that indistinguishability is an
equivalence relation, and that it is additive, in the sense that $p
\cong q \implies p + r \cong q + r$ for all $r \in C \cap \ZZ^d$.
Thus, when $B = \Lambda = C \cap \ZZ^d$ is a monoid,
indistinguishability is a congruence, so the quotient of~$B$ modulo
indistinguishability is again a monoid.

\begin{lemma}
Every fiber of the projection $\Lambda \to \Lambda/\til$ either
consists of $P$-positions or has empty intersection with~$\cP$.
\end{lemma}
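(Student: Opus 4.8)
The plan is to exploit that the defining relation for indistinguishability already includes the case $r = 0$, which pins down membership in~$\cP$ on each class. First I would note that the recession cone~$C$ of~$B$, being a cone, contains the origin. Hence, for any two positions $p, q \in B$ with $p \sim q$, the biconditional $p + r \in \cP \iff q + r \in \cP$ of Definition~\ref{d:cong} holds for every $r \in C$, and specializing to $r = 0$ yields $p \in \cP \iff q \in \cP$. Therefore any two indistinguishable board positions are either both $P$-positions or both not, so every fiber that meets~$B$ is contained in~$\cP$ or disjoint from it.

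It remains to address the fibers disjoint from~$B$. Each such fiber consists of defeated positions, which lie in the complement $\Lambda \minus B$. Since $\cP \subseteq B$ by Definition~\ref{d:latticegame}, no defeated position is a $P$-position, and so these fibers have empty intersection with~$\cP$. Combining the two cases proves the lemma for every fiber of $\Lambda \to \Lambda/\til$.

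The only point requiring care is that these two cases are genuinely exhaustive and disjoint: a single fiber cannot contain both a board position and a defeated position. This is where I would appeal to the construction of $\Lambda/\til$, in which the defeated positions are exactly those collapsing to the nil of the quotient, so that indistinguishability never identifies a board position with a defeated one. Granting that, the argument is immediate; the substance of the lemma is simply that indistinguishability, through the $r = 0$ instance of its definition, can neither separate two $P$-positions nor join a $P$-position to a non-$P$-position.
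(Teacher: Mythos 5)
Your opening argument---specializing the biconditional of Definition~\ref{d:cong} at $r = 0 \in C$---is precisely the paper's entire proof, which reads: if $p \sim q$, then by definition $p$ and $q$ are both $P$-positions or neither is. The problem is your third paragraph. To dispose of fibers that might mix board positions with defeated positions, you assert that no such fiber exists, on the grounds that ``defeated positions are exactly those collapsing to the nil of the quotient'' and that indistinguishability ``never identifies a board position with a defeated one.'' Neither assertion appears in the paper, and both are false in general. Concretely, take $d = 1$, $\Gamma = \{1\}$, $\Pi = \RR_+$, so $\Lambda = \NN$, with defeated set $\{0\}$ and game board $B = \{1,2,3,\ldots\}$ (mis\`ere subtract-one Nim). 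The unique set of $P$-positions is $\cP = \{1,3,5,\ldots\}$, and one checks that $p \sim q$ holds exactly when $p \equiv q \pmod 2$. The fiber $\{0,2,4,\ldots\}$ of $\Lambda \to \Lambda/\til$ therefore contains the defeated position $0$ together with infinitely many board positions; moreover the quotient here is the two-element group $\ZZ/2\ZZ$, which has no nil, and $0$ maps to its identity, not to an absorbing element. So the dichotomy your case analysis rests on---each fiber lies entirely inside $B$ or entirely outside---genuinely fails.

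Fortunately the lemma does not need that dichotomy, and the repair is immediate. The relation defining $\Lambda/\til$ must be read on all of $\Lambda$, not just on $B$ (otherwise the quotient of $\Lambda$ would not be defined): $p \sim q$ if and only if $(p + C) \cap \cP = p - q + (q + C) \cap \cP$ for arbitrary $p, q \in \Lambda$. Your $r = 0$ specialization then applies verbatim to every pair in every fiber, defeated or not, giving $p \in \cP \iff q \in \cP$. Since $\cP \subseteq B$ by Definition~\ref{d:latticegame}, a defeated position is never a $P$-position, so a mixed fiber such as $\{0,2,4,\ldots\}$ above is simply a fiber with empty intersection with $\cP$---which the lemma permits. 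In short, you should not try to exclude mixed fibers; the same one-line argument already shows that no fiber of any kind can contain both a $P$-position and a non-$P$-position, and that is all the statement requires.
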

\begin{proof}
If $p \sim q$, then by definition either $p$ and~$q$ are both
$P$-positions or neither is.
\end{proof}

\begin{cor}\label{c:strat}
Fix a lattice game $G = (\Gamma,B)$ played on a cone, meaning that
the game board is the complement of the defeated positions in a normal
affine semigroup~$\Lambda$.  If the mis\`ere quotient $\Lambda/\til$
is finite, then $G$ admits an affine stratification.
\end{cor}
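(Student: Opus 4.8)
The plan is to exhibit the set $\cP$ of $P$-positions as a finite union of fibers of the mis\`ere quotient map and then to apply Theorem~\ref{t:strat} fiber by fiber, assembling the pieces with Corollary~\ref{c:union}. All of the real work is already encapsulated in Theorem~\ref{t:strat}, which furnishes an affine stratification for each individual fiber of a monoid surjection out of an affine semigroup; the only task here is to arrange $\cP$ as a finite union of such fibers.

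First I would note that, since $G$ is played on a cone, $\Lambda = C \cap \ZZ^d$ is a normal affine semigroup, hence in particular a monoid, and that indistinguishability $\til$ is a congruence on it, so that the projection $\varphi: \Lambda \onto \Lambda/\til$ is a surjection of commutative monoids with $\Lambda$ an affine semigroup. Theorem~\ref{t:strat} then provides an affine stratification of every fiber of $\varphi$. Next I would invoke the lemma immediately preceding this corollary: each fiber of $\varphi$ either consists entirely of $P$-positions or meets $\cP$ not at all. Consequently $\cP$ is exactly the union of those fibers that consist of $P$-positions. The hypothesis that the mis\`ere quotient $\Lambda/\til$ is finite means that there are only finitely many fibers in total, so $\cP$ is a finite union of sets, each of which possesses an affine stratification. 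Corollary~\ref{c:union} then yields an affine stratification of $\cP$, which by Definition~\ref{d:latticegame} is precisely an affine stratification of $G$.

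The step I would treat most carefully is the verification that $\til$ is a congruence on the entire monoid $\Lambda$, since the game board $B = \Lambda \minus D$ omits the finite set $D$ of defeated positions, whereas the discussion preceding Definition~\ref{d:cong} records this only in the case $B = \Lambda$. The decisive observation is that the formulation ``$p + r \in \cP \iff q + r \in \cP$ for all $r \in C$'' defines $\til$ on all of $\Lambda$ at once, with no reference to whether $p$ or $q$ is defeated; additivity $p \sim q \implies p + s \sim q + s$ then follows for every $s \in C \cap \ZZ^d$ by the trivial reindexing $r \mapsto s + r$, using $s + (C \cap \ZZ^d) \subseteq C \cap \ZZ^d$. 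Thus the defeated positions require no special treatment beyond being swept into their own congruence classes, and the only genuinely essential hypothesis is finiteness of $\Lambda/\til$, which is exactly what forces the union defining $\cP$ to be finite and hence permits Corollary~\ref{c:union} to apply.
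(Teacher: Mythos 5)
Your proof is correct and follows essentially the same route as the paper's: $\cP$ is a union of fibers of the projection $\Lambda \to \Lambda/\til$ by the preceding lemma, finiteness of the mis\`ere quotient makes that union finite, and Theorem~\ref{t:strat} stratifies each fiber (the paper leaves the final assembly implicit where you cite Corollary~\ref{c:union}). Your added verification that indistinguishability is a congruence on all of~$\Lambda$, not merely on the game board~$B$, addresses a point the paper's terse proof glosses over, but it does not change the structure of the argument.
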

\begin{proof}
The set~$\cP$ of $P$-positions is a union of fibers of the projection.
If the quotient is finite, then the union is finite.  Now apply
Theorem~\ref{t:strat}.
\end{proof}

Although it is useful to record Corollary~\ref{c:strat}, which treats
the case of finite mis\`ere quotient \emph{monoids}, where the game is
played on a cone, the extension to arbitrary finite mis\`ere quotients
requires little additional work.

\begin{thm}\label{t:affineStrat}
Fix a lattice game $G = (\Gamma,B)$ played on a polyhedral set
$\Lambda = \Pi \cap \ZZ^d$.  If the mis\`ere quotient $\Lambda/\til$
is finite, then $G$ admits an affine stratification.
\end{thm}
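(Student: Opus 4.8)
The plan is to reduce to Theorem~\ref{t:strat} by replacing the non-monoid $\Lambda$ with its recession monoid. Set $A = C \cap \ZZ^d$, a normal affine semigroup of dimension~$d$. Since $\Pi = P + C$, the set $\Lambda = \Pi \cap \ZZ^d$ is stable under translation by~$A$, and Lemma~\ref{l:module} shows that $\Lambda$ is a finitely generated module over~$A$; fix a finite set $F \subseteq \Lambda$ with $\Lambda = F + A$. The obstacle, relative to Corollary~\ref{c:strat}, is that $\Lambda$ itself is not a monoid, so indistinguishability is not literally a monoid congruence and the projection $\Lambda \onto \Lambda/\til$ is not a morphism to which Theorem~\ref{t:strat} applies directly.

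To get around this, I would exploit the additivity of indistinguishability under~$A$. Because $p \til q$ implies $p + r \til q + r$ for every $r \in A$, the monoid $A$ acts on the finite set $Q = \Lambda/\til$ by translation, giving a monoid homomorphism $\alpha \colon A \to \mathrm{Maps}(Q, Q)$ into the monoid of self-maps of~$Q$ under composition. Its image $T = \alpha(A)$ is a finite commutative monoid---finite because $Q$ is finite, as the mis\`ere quotient is assumed finite, and commutative because translations commute---and $\alpha$ restricts to a surjection $A \onto T$ of commutative monoids. This is the surjection to which Theorem~\ref{t:strat} applies: every fiber of $A \onto T$ possesses an affine stratification.

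Next I would record the $P$-positions in terms of these fibers. By the lemma preceding Corollary~\ref{c:strat}, $\cP$ is a union of $\til$-classes, so $\cP$ is the preimage in~$\Lambda$ of a subset $Q_\cP \subseteq Q$. For each basepoint $f \in F$, the map $\psi_f \colon A \to Q$ sending $a \mapsto \alpha(a)(\,\overline{f}\,)$, where $\overline{f}$ is the class of~$f$, records the class of $f + a$; since $\psi_f$ depends on $a$ only through $\alpha(a)$, it factors through~$T$. Hence $\psi_f^{-1}(Q_\cP)$ is the preimage under $A \onto T$ of a subset of the finite monoid~$T$, that is, a finite union of fibers of $A \onto T$, each of which has an affine stratification; Corollary~\ref{c:union} then yields an affine stratification of $\psi_f^{-1}(Q_\cP)$.

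Finally I would assemble the pieces. The intersection $\cP \cap (f + A)$ equals the translate $f + \psi_f^{-1}(Q_\cP)$, which therefore has an affine stratification, translation of strata being harmless. As $f$ ranges over the finite set~$F$ these cover $\cP = \bigcup_{f \in F} \bigl(\cP \cap (f + A)\bigr)$, so a last application of Corollary~\ref{c:union} produces an affine stratification of~$\cP$, hence of~$G$. I expect the only real content to be the construction of the transition monoid~$T$ together with the verification that $\alpha$ is a surjection onto a finite commutative monoid; everything afterward is a bookkeeping reduction to Theorem~\ref{t:strat} through the factorizations of the basepoint maps $\psi_f$.
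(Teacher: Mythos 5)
Your proposal is correct and takes essentially the same route as the paper: both reduce via the decomposition $\Lambda = F + A$ with $A = C \cap \ZZ^d$ to the monoid $A$, handle $\cP$ basepoint by basepoint, and finish with Theorem~\ref{t:strat} plus Corollary~\ref{c:union}. The only difference is packaging: where you construct the finite transition monoid $T = \alpha(A) \subseteq \mathrm{Maps}(Q,Q)$ and factor every $\psi_f$ through the single surjection $A \onto T$, the paper simply notes that each composite $A \to f + A \to \Lambda/\til$ induces a congruence on~$A$ (by additivity of indistinguishability) whose classes are the fibers, so Theorem~\ref{t:strat} applies directly to each quotient --- your $T$ is just a uniform common refinement of those per-basepoint congruences, and the extra finiteness of $T$ is not needed since Theorem~\ref{t:strat} imposes no finiteness on the target monoid.
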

\begin{proof}
The set~$\cP$ of $P$-positions is a union of fibers of the projection
$\Lambda \to \Lambda/\til$.  Since the quotient is finite, the union
is finite.  Therefore it suffices to show that every fiber $\Phi
\subseteq \Lambda$ of the projection possesses an affine
stratification.

The pointed hypothesis on~$\Pi$ implies that $\Lambda = F + A$, where
$F \subseteq \ZZ^d$ is finite and $A = C \cap \ZZ^d$ is a normal
affine semigroup.  The fiber $\Phi$ is a finite union $\Phi =
\bigcup_{f \in F} \Phi\cap(f+A)$.  By Corollary~\ref{c:union}, it
therefore suffices to show that for each lattice point $f \in F$,
every fiber of the map $f + A \to \Lambda/\til$ possesses an affine
stratification.  But the composite map $A \to f + A \to \Lambda/\til$
induces a congruence on~$A$ whose classes are the fibers, to which
Theorem~\ref{t:strat} applies.
\end{proof}



\end{document}